\begin{document}

\title{Brian\c con-Speder examples and  the failure of weak Whitney regularity}
\author{Karim Bekka and David Trotman} \thanks{K. B. :  IRMAR, 
Universit\'e de Rennes 1, Campus Beaulieu, 35042 Rennes, France ; 
D. T. : LATP, UMR 7353, Aix-Marseille Universit{\'e}, 39 Rue Joliot-Curie, 13453 Marseille, France.}
\email{karim.bekka@univ-rennes1.fr\\ David.Trotman@cmi.univ-mrs.fr}
\date{September 27, 2012}
\markboth{Regular stratification}{K. Bekka}       
 \maketitle
\theoremstyle{plain}
\newtheorem{Thm}{Theorem} 
\newtheorem{Def}{Definition}
\newtheorem{Lm}{Lemma}                             
\newtheorem{Rem}{Remark}

\def\bdef{\begin{Def}}
\def\endef{\end{Def}}
\def\bthm{\begin{Thm}}
\def\ethm{\end{Thm}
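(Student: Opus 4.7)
The plan is to verify weak Whitney regularity directly, exploiting the weighted quasi-homogeneity of the Briançon--Speder polynomial $F(x,y,z,t)$. First I would record the $\mathbb{C}^{*}$-action $\lambda\cdot(x,y,z,t)=(\lambda^{a}x,\lambda^{b}y,\lambda^{c}z,t)$ under which each slice $F(\cdot,t)$ is homogeneous of some fixed degree $d$, and fix the natural stratification of $V(F)$ by the smooth stratum $X=V(F)\setminus Y$ and the singular $t$-axis $Y$. A general sequence $p_{n}\in X$ with $p_{n}\to(0,t_{0})\in Y$ can then be rescaled by $\lambda_{n}$ chosen so that $\lambda_{n}\cdot p_{n}$ has weighted norm $1$; after passing to a subsequence, $\lambda_{n}\cdot p_{n}\to p^{*}$ in the compact weighted link of $V(F(\cdot,t_{0}))$.

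Second, quasi-homogeneity forces the projective limit of $T_{p_{n}}X$ to coincide with the limit of $T_{\lambda_{n}\cdot p_{n}}X$, which equals $T_{p^{*}}V(F(\cdot,t_{0}))\oplus\mathbb{R}\partial_{t}$. Call this limit plane $\tau$. It automatically contains $T_{(0,t_{0})}Y=\mathbb{R}\partial_{t}$, so Whitney (a) holds at $(0,t_{0})$; more usefully, $\tau$ also contains the weighted Euler vector field $E(p^{*})=(ax^{*},by^{*},cz^{*},0)$ at $p^{*}$, since the $\mathbb{C}^{*}$-orbit through $p^{*}$ lies inside $V(F(\cdot,t_{0}))$ and so $E(p^{*})\in T_{p^{*}}V(F(\cdot,t_{0}))$.

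Third, I would verify the weak analog of (b). For the distinguished tubular retraction $\pi(x,y,z,t)=(0,0,0,t)$, which is adapted to the weighted scaling, the displacement $p_{n}-\pi(p_{n})=(x_{n},y_{n},z_{n},0)$, once normalised by the weighted distance $\|p_{n}\|_{w}$, converges to a positive multiple of $E(p^{*})$ and therefore lies in $\tau$ by the previous step. This establishes the weak form of condition (b) relative to this $\pi$. Independence from the precise choice of tubular retraction, if required by the definition of weak Whitney regularity used in the paper, is a routine continuity argument using the smoothness of $Y$ and the fact that the limit planes $\tau$ have been identified uniformly in the direction of approach.

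The main obstacle will be matching the weighted scaling with whatever control data (retraction, distance function, or lifted vector field) enters the precise definition of weak Whitney regularity. Once the compatible choice is made, the sequences that obstruct strict (b) -- namely those approaching $Y$ tangentially along the smallest weights -- are exactly those whose normalised displacement tends to an Euler direction, and those directions are rescued by their inclusion in every limit tangent plane $\tau$. This is the mechanism by which weak Whitney regularity survives the failure of (b) in the Briançon--Speder family, and it is what the theorem is asserting.
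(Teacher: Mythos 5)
Your proposal sets out to prove that the Brian\c con--Speder family \emph{is} weakly Whitney regular, but the paper establishes exactly the opposite: none of the Brian\c con--Speder examples are weakly Whitney regular. The paper exhibits explicit analytic arcs on $F^{-1}(0)$, namely curves with initial terms $\gamma(s)=(s^{8},as^{5},4a^{-7}s^{5},-5a^{-6}s^{2})$ with $a^{16}=-8$ (for the $\mu=364$ example), along which the quantity
$$\Delta=\frac{\bigl|\sum_i x_i\,\partial F/\partial x_i\bigr|}{\|x\|\,\|\mathrm{grad}_x F\|}$$
tends to $1$. Since condition $(\delta^{\pi})$ requires this limit to be bounded by some $\delta<1$, and since $(\delta)+(a)\Leftrightarrow(\delta^{\pi})+(a)$ (Lemma 1) while $(a)$ holds by $\mu$-constancy, weak Whitney regularity fails. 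The paper then carries out a valuation-theoretic case analysis to show these are \emph{all} the curves of failure.

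The concrete error in your argument is in the third step. The secant direction entering conditions $(b)$ and $(\delta)$ is the \emph{Euclidean} normalisation $(x,y,z)/\|(x,y,z)\|$, not the weighted one, and the weighted $\mathbb{C}^{*}$-rescaling by $(\lambda^{a},\lambda^{b},\lambda^{c})$ does not preserve Euclidean angles. Along the curves above, the Euclidean-normalised secant converges to the direction $(0:a^{8}:4)$, whereas the limit of the unit normals $\mathrm{grad}_xF/\|\mathrm{grad}_xF\|$ is $(0:-2:a^{8})$; when $a^{16}=-8$ these two directions are proportional, so the limiting secant is \emph{orthogonal} to the limiting tangent plane $\tau$ and $\sin\theta(\ell,\tau)=1$. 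Your claim that the normalised displacement converges to a multiple of the Euler vector $E(p^{*})=(ax^{*},by^{*},cz^{*},0)$, and hence lies in $\tau$, conflates the weighted and Euclidean normalisations: the Euclidean limit is dominated by the slowest-decaying coordinates and need not be tangent to the orbit of the $\mathbb{C}^{*}$-action. This is precisely the mechanism by which $(b)$ fails for Brian\c con--Speder, and the paper's point is that the failure is maximal ($\sin\theta=1$ rather than merely nonzero), so that no constant $\delta_y<1$ can rescue even the weakened condition.
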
} 
\def\blm{\begin{Lm}}
\def\elm{\end{Lm}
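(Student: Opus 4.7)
The plan is to verify the weak Whitney condition for the standard stratification $(X^*, Y)$ of the Briançon--Speder family by a direct computation of limits of tangent planes along the one-dimensional singular stratum $Y$, and then to recall the classical argument of Briançon and Speder showing that the stronger Whitney (b) condition fails, so that the lemma concretely separates the two regularity notions.

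I would first fix the setup: let $F(x,y,z,t)$ denote the Briançon--Speder polynomial, $X = F^{-1}(0) \subset \mathbb{C}^4$, stratified by the smooth part $X^* = X \setminus Y$ and the $t$-axis $Y = \{x=y=z=0\}$. At each $p \in X^*$ the tangent plane is $T_p X = \ker dF(p)$, and the unit normal is proportional to $\nabla F(p)$. The weak Whitney condition at $q \in Y$, in the formulation used in the paper, amounts after unwinding to the inclusion $T_q Y \subset \lim_n T_{p_n} X$ together with an angular bound between the secants $\overline{p_n q_n}$ and the tangent planes $T_{p_n} X$ that is strictly weaker than the angular bound required by (b).

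The technical heart is to show that along any analytic arc $p(s) \to q \in Y$ in $X^*$ (the general case reducing to this via the Curve Selection Lemma), the partial derivative $\partial F/\partial t$ evaluated at $p(s)$ has strictly higher order of vanishing in $s$ than $\nabla_{x,y,z} F(p(s))$, so that $F_t/|\nabla F| \to 0$ and hence $\partial/\partial t$ lies in every limit of $T_{p_n} X$. This estimate is obtained by expanding $p(s)$ in Puiseux series and exploiting the weighted homogeneity of $F$ in the spatial variables for the Briançon--Speder weights: one checks component by component that the $t$-partial is of higher weight than at least one of $F_x, F_y, F_z$ along $X$.

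The main obstacle I expect is the case analysis across dominant monomials of the Puiseux expansion: the example was precisely engineered so that Whitney (b) fails on a special family of arcs, and it is on exactly those arcs that one must verify that the weak-Whitney bound still survives while the (b)-bound does not. Once this has been checked on every dominant Puiseux type, I would close by recalling the explicit Briançon--Speder sequence $p_n \to 0$ along which the limiting secant direction and the limit tangent plane $\lim T_{p_n} X$ form an angle bounded away from zero; this witnesses failure of (b) and shows that the lemma is non-vacuous, confirming that the Briançon--Speder family provides a genuine separation between weak Whitney regularity and Whitney (b) regularity.
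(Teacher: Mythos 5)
Your outline does not prove either of the lemmas that actually appear in the paper, and, more seriously, the statement it aims at is the negation of the paper's main finding. The two lemmas proved in the text are: (i) the soft equivalence $(\delta)+(a) \Longleftrightarrow (\delta^\pi)+(a)$, obtained in a few lines by writing the limiting secant direction $\ell$ as $\ell_1+\ell_2$ with $\ell_1$ tangent to $Y$ and $\ell_2$ tangent to the fibre $\pi^{-1}(y)$, using subadditivity of $\sin\theta(\cdot,\tau)$ and condition $(a)$ to annihilate the $\ell_1$ term; and (ii) the Brian\c con--Speder valuation lemma, asserting that along an analytic arc $\gamma$ in $V$ the inequality $\nu(\sum_i x_i\partial F/\partial x_i)>\nu(x)+\nu(J_x(F))$ is equivalent to $\nu(t)+\nu(\partial F/\partial t)>\nu(x)+\nu(J_x(F))$, proved by differentiating $F\circ\gamma\equiv 0$ and observing that each inequality amounts to the vanishing of $\langle A,B\rangle$, where $A$ and $B$ are the leading coefficient vectors of $x(s)$ and of $J_xF(\gamma(s))$. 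Neither lemma says ``the Brian\c con--Speder stratification is weakly Whitney but not Whitney $(b)$'', which is what your plan sets out to establish.

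The decisive gap is in your last paragraph. You correctly identify the special arcs on which $(b)$ fails as the delicate case, but you then assert that on those arcs ``the weak-Whitney bound still survives''; it does not, and showing that it does not is precisely the content of Sections 7 and 8. Along the arcs with initial terms $(s^8, as^5, 4a^{-7}s^5, -5a^{-6}s^2)$ with $a^{16}=-8$, the limit of unit secants is $(0:a^8:4)$, the limit of unit normals $\mathrm{grad}_xF/\|\mathrm{grad}_xF\|$ is $(0:-2:a^8)$, and the ratio $|\sum_i x_i\partial F/\partial x_i|/(\|x\|\,\|\mathrm{grad}_xF\|)$ tends to $1$; hence no constant $\delta_y<1$ exists, $(\delta^\pi)$ fails, and by the lemma in Section 2 together with $(a)$-regularity so does $(\delta)$ and weak Whitney regularity. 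The estimate you do propose to prove, namely $(\partial F/\partial t)/\|\mathrm{grad}_xF\|\to 0$, is only condition $(a)$, which indeed holds automatically here by L\^e--Saito since $\mu$ is constant; it controls limits of tangent planes against $T_qY$ but says nothing about the angle between secants and tangent planes, which is where the weak Whitney condition lives and where it breaks down.
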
}
\def\brm{\begin{Rem}}
\def\erm{\end{Rem}}
\def\beq{\begin{eqnarray}}
\def\eneq{\end{eqnarray}}
\def\Cal{\mathcal}
\def\Bbb{\mathbb}
\def\gm{\gamma}
\def\dt{\delta}
\def\eps{\epsilon}
\def\lb{\lambda}
\def\Bbb{\mathbb}
\def\R{\Bbb R}
\def\P{\Cal P}
\def\C{\Bbb C}
\def\Z{\Bbb Z}

\section{Introduction} 
 
 In \cite{3, 5} we introduced a weakened form of Whitney's condition $(b)$, motivated by the work of M. Ferrarotti on metric properties of Whitney stratified sets \cite{12, 13}. The resulting weakly Whitney stratified sets retain many properties of Whitney stratified sets. They are Bekka $(c)$-regular, as recalled in section 5 below. It follows, by the thesis of the first author \cite{3, 4}, that they have the structure of abstract stratified sets \cite{19}, and thus are locally topologically trivial along strata \cite{27, 19}, and are triangulable \cite{17}. Weakly Whitney stratified sets  also have many of the metric properties known to hold for  Whitney stratified sets, as we proved  in  \cite{7}.  Ferrarotti  {\cite{14, 15}, Orro and Trotman \cite{22}, Parusinski  {\cite{24}, Pflaum  {\cite{25}, and Sch\"urmann  \cite{26} have described and developed further useful properties of weakly Whitney stratified sets.

It is easy to find real algebraic varieties with weakly Whitney regular stratifications which are not Whitney regular, and we give such an example in section 3 below. No examples are known among complex analytic varieties however, so that the natural question arises : do Whitney regularity and weak Whitney regularity coincide in the complex case ? As a test,  in this paper we study the well-known Brian\c con-Speder examples, consisting of Milnor number constant families of complex surface singularities in ${\bf C}^3$ which are not Whitney regular  \cite{10}, although they are $(c)$-regular, to determine if they are weakly Whitney regular.

We investigate systematically all of the (infinitely many) Brian\c con-Speder examples, and establish in particular that none of these examples are weakly Whitney regular. We determine {\bf all} the complex curves along which Whitney $(b)$-regularity fails and {\bf all} the complex curves along which weak Whitney regularity fails. It turns out that for each example there are a finite number of curves $\gamma_i$ such that weak Whitney regularity fails precisely along those curves tangent to one of the $\gamma_i$ at the origin. For example, the classical Brian\c con-Speder example $f_t(x,y,z) = x^5 + txy^6 + y^7z + z^{15}$ for which  $\mu (f_t) = 364$, has $16$ such curves $\gamma_1, \dots, \gamma_{16}$ defining thus all the curves on which weak Whitney fails, where each $\gamma_i (s)$ is of the form $(s^8, a s^5, 4 a^{-7} s^5,  - 5 a^{-6} s^2) \in {\bf C}^4$, with $a^{16} = -8$ (hence the $16$ distinct complex solutions).

 It should be of interest to interpret these curves in the light of other studies of the metric geometry of singular complex surfaces, for example the recent work of Birbrair, Neumann and Pichon characterising their inner bilipschitz geometry \cite{8}, and the same authors' work characterising outer bilipschitz triviality \cite{9}, or the work of Garcia Barroso and Teissier on the local concentrations of curvature \cite{16}.

Further evidence that weak Whitney regularity and Whitney regularity might be equivalent for complex analytic stratifications, at least for complex analytic hypersurfaces,  comes from the recent result of the second author that equimultiplicity of a family of complex analytic hypersurfaces follows from weak Whitney regularity of the stratification. 

The second author acknowledges the support of the University of Rennes 1 during several visits to Rennes, when much of the work in this paper was done.

\section{Definitions.}

We start by recalling the Whitney conditions.

\medskip

Let $X, Y$ be two submanifolds of a riemannian manifold $M$ and take $y \in  X \cap Y$.
\medskip

{\bf Condition $(a)$}:
The triple $(X, Y, y)$ satisfies Whitney's condition $(a)$ if for each sequence of points $\{ x_i \} $ of $X$ converging to $y \in  Y$ such that $T_{x_i}X$
converges to $\tau$ (in the corresponding grassmannian in $TM$), then $T_yY \subset \tau$.

\medskip

{\bf Condition $(b)$}:
The triple $(X, Y, y)$ satisfies Whitney's condition $(b)$ if for each local diffeomorphism $h : \mathbb{R}^n \rightarrow M$ onto a neighbourhood $U$ of $y$
in $M$ and for each sequence of points $\{ (x_i, y_i) \}$ of $h^{-1}(X) \times h^{-1}(Y)$ converging
to $(h^{-1}(y), h^{-1}(y))$, such that the sequence $\{T_{x_i}h^{-1}(X)\}$ converges to $\tau$ in the
corresponding grassmannian and the sequence $\{ \overline{x_iy_i} \}$ converges to $\ell$ in $  \mathbb{P}	^{n-1}({ \mathbb{R}	 })$, then $\ell \subset \tau$.  

\medskip

{\bf Condition $(b^{\pi})$}:
The triple $(X, Y, y)$ satisfies Whitney's condition $(b^{\pi})$ if for each local diffeomorphism $h : \mathbb{R}^n \rightarrow M$ onto a neighbourhood $U$ of $y$
in $M$ and for each sequence of points $\{ x_i \}$ of $h^{-1}(X)$ converging
to $h^{-1}(y)$, such that the sequence $\{T_{x_i}h^{-1}(X)\}$ converges to $\tau$ in the
corresponding grassmannian and the sequence $\{\overline{ x_i \pi(x_i )} \}$ converges to $\ell$ in $ \mathbb{P}^{n-1}({\mathbb{R}})$, then $\ell \subset \tau$.  

\medskip

 One says that $(X, Y)$ satisfies condition $(a)$ (resp.$(b)$, $(b^{\pi})$) if $(X, Y, y)$ satisfies $(a)$
(resp. $(b)$, $(b^{\pi})$) at each $y \in X \cap Y $.

\medskip

\noindent {\bf Remark 2.1.} It is an easy exercise to check that condition $(b)$ implies condition $(a)$ \cite{19}. Also $(b)$ is equivalent to both $(a)$ and $(b^{\pi})$ holding \cite{20}.

\medskip

We now introduce a regularity condition $(\delta)$, obtained by weakening condition $(b)$.

Given a euclidean vector space $V$, and two vectors $v_1, v_2 \in V^*= V-\{0\}$, define the sine
of the angle $\theta (v_1, v_2)$ between them by :
$$
\sin \theta (v_1, v_2) = \frac {||v_1\wedge v_2||}{||v_1||.||v_2||}
$$

\noindent where $v_1\wedge v_2$ is the usual vector product  and
$||.||$ is the norm on $V$ induced by the euclidean structure.
Given two  vector subspaces  $S$ and $T$ of $V$ we define the sine of the angle between
$S$ and $T$ by :
$$
\sin \theta (S,T) =\sup \{ \sin\theta (s,T) : s\in S^*\}
$$
where 
$$
\sin \theta (s,T) =\sup \{ \sin\theta (s,t) : t\in T^*\}.
$$
 If
$\pi_T : V\longrightarrow T^\perp $ is the orthogonal projection onto the
 orthogonal complement of $T$, then
 $\sin \theta (s,T) =\frac {||\pi_T(s)||}{||s||}$. 
 The definition for lines
 is similar to that for
vectors - take unit vectors on the lines.

 One verifies easily that :
$
 \,\sin \theta (v_1, v_3) \leq \sin \theta (v_1, v_2) + \sin \theta (v_2,
v_3)$ for all $ v_1, v_2, v_3 \in V^*,$
and
$ \sin \theta(S_1+S_2, T) \leq \sin \theta(S_1, T)+\sin \theta(S_2, T),
$
for subspaces $S_1, S_2$, $T$ of $V$ such that $S_1$ is
orthogonal to $S_2.$

\medskip

{\bf Condition $(\delta)$}:
We say that the triple $(X,Y,y)$ satisfies condition ($\delta$) if 
 there exists a local diffeomorphism $h : \mathbb {R}^n \longrightarrow  M
$ to a neighbourhood $U$ of $y$ in $M$, and there exists a real number
$\delta_y$,
$0 \leq \delta_y < 1$, such that for every sequence  $\{ x_i, y_i\}$
of $h^{-1}(X)\times h^{-1}(Y)$ which converges to ($h^{-1}(y), h^{-1}(y)$)
such that the sequence $\overline{x_iy_i}$ converges to $l$ in
$\mathbb{P}^{n-1}(\mathbb {R})$,
and the sequence $T_{x_i}h^{-1}(X)$ converges to $\tau$, then
$\sin \theta(l, \tau ) \leq \delta_y$. 
\bigskip

\noindent {\bf Remark 2.2.}
Clearly condition ($b$) implies 
$(\delta)$ : just take $\delta_y = 0$.

\medskip

{\bf Definition.}
A {\it weakly Whitney stratification}  of a subspace
 $A$ of a manifold $M$ is a locally finite partition 
of $A$ into connected submanifolds, called the {\it strata}, such that :

1 ) - Frontier Condition : If $X$ and $Y$ are distinct strata
such that $\overline X \cap Y \neq \emptyset $, that is
 $X$ and $Y$ are {\it adjacent},
then $Y\subset \overline
X$.

2 ) - Each pair of adjacent strata satisfies condition $(a)$. 

3 ) - Each pair of adjacent  strata satisfies condition $(\delta)$.

\medskip

{\bf Examples.}

\medskip
1. Every Whitney stratification is weakly Whitney regular.

\medskip
2. 
Let $X$ be the open logarithmic spiral with polar equation,

$$\{ (r,\theta)\in\mathbb {R}^2 \, |\, r = e^{\frac {t}{\tan (\beta) }},
 \theta = t (\textrm{mod}\, 2\pi)\} 
\quad\textrm{ where}\quad  0 <\beta  < \frac\pi{2} \}$$

\noindent and let  $Y=\{0\} \subset \mathbb {R}^2$. Condition $(a)$ is trivially
satisfied for $(X,Y,\{0\})$, and condition $(\delta )$ is also
satisfied, but condition 
$(b)$ fails because the angle $\theta(\overline{x0}, T_x X) = \beta$
 is constant and nonzero for all $x$ in $X$. So this is a weakly Whitney regular stratification which is not Whitney regular.
 
\medskip
3. If $X$  is the open spiral with  polar equation
$$\{ (r,t)\in\mathbb {R}^2\, |\, r = e^{-\sqrt {t}}, t\geq 0  \}$$
 and $Y=\{0\} \subset \mathbb {R}^2$, then
the stratified space $X \cup Y$ is not weakly Whitney.

\medskip

\noindent {\bf Remark 2.3.} 
In the definition of weakly Whitney stratification, we could further 
weaken condition $(\delta )$ as follows :
If $\pi $ is a local $C^1$ retraction associated to a $C^1$
tubular neighbourhood of $Y$ near $y$,
 a condition $(\delta^\pi )$ is obtained from the definition of $(\delta)$ 
by replacing the sequence
$\{y_i\}$ by the sequence $\{ \pi (x_i)\}$.
Clearly $(b^{\pi}$ implies $(\delta^{\pi})$. Recall that $(b) \Longleftrightarrow (b^{\pi}) + (a)$ {\cite{NT}}, as noted above.

\medskip

\begin{Lm} {\it $(\delta ) + (a) \Longleftrightarrow (\delta^\pi ) + (a). $}

\end{Lm}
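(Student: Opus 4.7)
The forward implication $(\delta)+(a) \Rightarrow (\delta^\pi)+(a)$ is immediate: for any sequence $\{x_i\}\subset h^{-1}(X)$ converging to $h^{-1}(y)$, the points $y_i := \pi(x_i)$ lie in $h^{-1}(Y)$ and tend to $h^{-1}(y)$, so the pair $(x_i,y_i)$ already satisfies the hypothesis of $(\delta)$; the conclusion transfers verbatim with the same $h$ and constant $\delta_y$.

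For the converse $(\delta^\pi)+(a) \Rightarrow (\delta)$, the plan is to exploit the existential freedom in $h$ and $\pi$ to normalise: by a linear change of coordinates in the source of $h$, I may assume that the fibre $T_y\pi^{-1}(y)$ is orthogonal to $T_y h^{-1}(Y)$. Because such a linear change alters sines of angles only by a bounded multiplicative factor, the hypothesised constant $\delta_y^\pi<1$ survives, possibly with a new value still strictly less than $1$.

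With this orthogonality in hand, I take a sequence $(x_i,y_i)\in h^{-1}(X)\times h^{-1}(Y)$ converging to $(h^{-1}(y),h^{-1}(y))$ with $\overline{x_iy_i}\to\ell$ and $T_{x_i}h^{-1}(X)\to\tau$, and pass to a subsequence so that the auxiliary limits $\overline{x_i\pi(x_i)}\to\ell'$ and $\overline{\pi(x_i)y_i}\to m$ also exist. Then $(\delta^\pi)$ gives $\sin\theta(\ell',\tau)\leq\delta_y^\pi$; since $\pi(x_i),y_i\in h^{-1}(Y)$ tend to $h^{-1}(y)$, the chord-limit $m$ is contained in $T_y h^{-1}(Y)$, and by $(a)$ this is contained in $\tau$, forcing $\sin\theta(m,\tau)=0$. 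The identity $x_i-y_i=(x_i-\pi(x_i))+(\pi(x_i)-y_i)$ puts $\ell$ in the $2$-plane $\ell'+m$, whose summands are orthogonal by our normalisation. The sine triangle inequality for orthogonal subspaces stated in Section~2 then yields
\[
\sin\theta(\ell,\tau)\;\leq\;\sin\theta(\ell'+m,\tau)\;\leq\;\sin\theta(\ell',\tau)+\sin\theta(m,\tau)\;\leq\;\delta_y^\pi<1,
\]
so $(\delta)$ holds with $\delta_y:=\delta_y^\pi$.

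The main obstacle is the orthogonality normalisation itself. Without arranging $T_y\pi^{-1}(y)\perp T_y h^{-1}(Y)$, the same assembly would only give a bound of the form $\delta_y^\pi/|\sin\gamma|$, where $\gamma$ is the angle between the limit directions $v_\infty\in T_y\pi^{-1}(y)$ and $w_\infty\in T_y h^{-1}(Y)$; since $\gamma$ can be arbitrarily small for transverse subspaces in general position, such a bound need not be $<1$. Once orthogonality is secured, the remainder is essentially a single application of the inequality already recorded in the paper, with condition $(a)$ entering exactly through $\sin\theta(m,\tau)=0$.
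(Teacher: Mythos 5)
Your converse argument is, in its essentials, the paper's own proof: decompose the secant $x_i-y_i$ as $(x_i-\pi(x_i))+(\pi(x_i)-y_i)$, control the fibre component by $(\delta^\pi)$, kill the $Y$-component in the limit by condition $(a)$, and conclude with the subadditivity of $\sin\theta$ for orthogonal summands recorded in Section~2. You are in fact more explicit than the published proof about why the auxiliary limits exist and why the limit $m$ of $\overline{\pi(x_i)y_i}$ lies in $T_yY\subset\tau$.

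The one step I would not accept as written is the normalisation. You claim that a linear change of coordinates in the source of $h$ alters sines of angles only by a bounded multiplicative factor, and that therefore the constant $\delta_y^\pi<1$ survives with a new value still strictly less than $1$. The multiplicative bound is true (one has $\sin\theta(Au,Av)\le \|\Lambda^2A\|\,\|A^{-1}\|^2\,\sin\theta(u,v)$), but it does not yield a constant $<1$: nothing prevents $C_A\,\delta_y^\pi\ge 1$. Worse, a linear isomorphism can send a pair of lines making an acute angle to an orthogonal pair --- take $A=\mathrm{diag}(1,10)$, $u=(1,1)$, $v=(100,-1)$: then $\sin\theta(u,v)\approx 0.71$ while $\langle Au,Av\rangle=0$ --- so a uniform bound $\sin\theta\le\delta<1$ genuinely need not survive a linear coordinate change. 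Fortunately the normalisation is not needed. In this paper the retraction entering $(\delta^\pi)$ is the one induced by a chart $\phi$ for $Y$ (Section~5), namely $\pi_\phi=\phi^{-1}\circ\pi_d\circ\phi$, whose fibres $\{p\}\times\mathbb{R}^{n-d}$ are already orthogonal to $\phi(U\cap Y)=\mathbb{R}^d\times\{0\}$ in the chart coordinates. Since $(\delta)$ is itself an existential statement over charts, you may simply verify it in that same chart: there the decomposition $x_i-y_i=(x_i-\pi(x_i))+(\pi(x_i)-y_i)$ is exactly orthogonal, each summand has norm at most $\|x_i-y_i\|$, and the rest of your argument closes the proof with $\delta_y=\delta_y^\pi$.
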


\noindent {\bf Proof.} Clearly $(\delta ) \Longrightarrow (\delta^\pi )$,
so it suffices to show that $(\delta^\pi ) + (a) \Longrightarrow (\delta ) $.
In the definition of $(\delta )$ decompose the limiting vector
 $l$ as the sum of a vector $l_1$
tangent to $Y$ at $y$, and a vector $l_2$ tangent to $\pi^{-1}(y)$
at $y$.
Then $\sin \theta(l , \tau ) = \sin \theta(l_1 + l_2 , \tau ) \leq \sin
\theta(l_1, \tau ) +  \sin \theta(l_2, \tau ).$
By condition $(a)$, $\sin \theta(l_1, \tau ) = 0 $, hence
$\sin \theta(l , \tau ) \leq   \sin \theta(l_2, \tau )$ which is less than or equal to $ \delta_y $
by hypothesis, 
implying  $(\delta )$. \qed

\medskip
This will make checking weak Whitney regularity easier.

\section{ Real algebraic examples.}

Because many of the important applications of Whitney stratifications arise in algebraic geometry, it is necessary to know how weak Whitney regularity compares with Whitney regularity for semi-algebraic or real algebraic stratifications, as well as for complex algebraic/analytic stratifications. The following simple example illustrates that weak Whitney regularity is strictly weaker than Whitney regularity for real algebraic stratifications. No such example is currently known in the case of complex algebraic stratifications, and this will be the motivation for the calculations in sections 7, 8 and 9 of this paper.

\medskip

\noindent ${\textrm{\bf Example I.}}$

Let $V=\{(x,y,t)\in\mathbb{R}^3\,|\, y^{6} = t^6x^2+x^6 \}$, let
 $Y$ denote the $t$-axis, and let $X = V \setminus Y$.
One can check that the triple $(X,Y,(0,0,0))$ satisfies conditions $(a)$ and
($\delta$), but not condition $(b)$. 
See {\cite{BT2}} for details.
\bigskip

The following example illustrates the independence of the conditions $(a)$ and $(\delta)$ in the case of real algebraic stratifications..

\medskip

\noindent${\textrm{\bf Example II.}}$

Let $V=\{(x,y,t)\in\mathbb{R}^3\,|\,  y^{20} = t^4x^6+x^{10} \}$, 
let $Y$ denote the $t$-axis and let $X = V \setminus Y$.
Then the triple $(X,Y,(0,0,0))$ satisfies condition $(\delta)$, but not condition $(a)$. 
 For details see \cite{6}.

\section { Some properties of weakly Whitney stratified spaces.}

Like Whitney stratified spaces, weakly Whitney stratified spaces are filtered by dimension.

\medskip
 
{\bf Proposition 4.1 \cite{6}.}
{\it Suppose that a triple $(X, Y, y)$, $y \in Y \cap \overline{X}$,  satisfies conditions 
$(a)$ and $(\delta)$. 
Then $dim\, Y < dim\,X$.}
\medskip

{\bf Definition.} If $(A, \Sigma)$, $(B, \Sigma ')$
are weakly Whitney stratified spaces in $M$, then $(A, \Sigma )$ and
  $(B, \Sigma ')$
 are said to be {\it in general position} if for each pair of strata $X\in
\Sigma$ and $X'\in \Sigma '$, $X$ and $X'$ are in general position in 
$M$, i.e. the natural map :

$$
T_xM \longrightarrow T_x M/T_x X \oplus  T_x M/T_x X'
$$
 is surjective for all $x\in X\cap X'$.

\medskip

{\bf  Proposition 4.2.}
 {\it Let $V$ be a submanifold of $M$ in general position with respect to
 $ (A, \Sigma )$. Then  $(A\cap V, \Sigma \cap V)$ is weakly Whitney regular, if $(A,\Sigma)$ is weakly Whitney regular.}
   
\medskip

A proof is given in \cite{6}. A stronger statement, in the case of  two stratified sets transverse to each other, is given in \cite{23}.

\medskip

If $A$ is locally closed and
$(A, \Sigma )$ is weakly Whitney  
(without assuming the frontier condition) then the stratified space
$(A, \Sigma_c )$, whose strata are the connected components
of the strata of $\Sigma $, automatically satisfies the frontier condition. 
See \cite{3, 4} for the $(c)$-regular case, which includes
the case of weakly Whitney stratifications, as remarked below.

\medskip

{\bf Proposition 4.3.}
{\it Let $f : M \to M'$ be a $C^1$ map, and let $(A, \Sigma)$ be a weakly Whitney stratified
 space in $M'$. If $f$ is transverse to each stratum $X\in \Sigma$, then the pull-back 
$(f^{-1}(A), f^{-1}(\Sigma ))$ is  weakly Whitney stratified.}

\medskip

See \cite{6} for proofs.

\section {  $(c)$-regularity of weakly Whitney stratifications.}

In this section we recall the fact that weakly Whitney stratified spaces
are $(c)$-regular.
 It follows \cite{3, 4} that they can be given 
 the structure of abstract stratified sets in the sense of Thom-Mather \cite{19}, 
 implying in particular local topological 
triviality along strata and triangulability \cite{17}.

\medskip

Let $(U, \phi)$ be a $C^1$ chart at $y$ for a submanifold
$Y \subseteq M$ where
  $dim Y = d$,
$$\phi : (U, U\cap Y, y) \longrightarrow (\mathbb{ R}^n, \mathbb{ R}^d \times  \{0\}^{n-d}, 0).$$
Then  $\phi$  defines a tubular neighbourhood $T_\phi $ of $U\cap Y$ in $U$,
induced by the standard tubular neighbourhood of $\mathbb{R}^d\times  \{0\}^{n-d}$ in
$\mathbb{ R}^n$ :

- with retraction 
$\pi_\phi = \phi^{-1} \circ \pi_d \circ \phi $ where $ \pi_d :
\mathbb{R}^n\to \mathbb{ R}^d$ is 
 the canonical projection, 

- and distance function $\rho_\phi = \rho \circ \phi : U \to \mathbb {R^+}$ where
$\rho : \mathbb{ R}^n \to \mathbb{ R}^+$ is the function defined by 
$\rho (x_1, \cdots , x_n) = \Sigma_{i=d+1}^n x_i^2$.
\smallskip

It is well-known (see \cite{19, 28, 29}) that if a pair $(X, Y)$ of submanifolds of $M$ 
satisfies Whitney's condition $(b)$ then for {\it any} sufficiently small 
tubular neighbourhood
$T_Y$ of $Y$ in $M$, the map
$$
(\pi_Y, \rho_Y)|_{X\cap T_Y} : X\cap T_Y\longrightarrow Y\times \mathbb{ R} 
$$
is a submersion. In fact this property characterises $(b)$-regularity \cite{29}.
 For comparison, when the pair $(X, Y)$ is 
 weakly Whitney, there exists {\it some} tubular neighbourhood $T_Y$
such that the map
$$
(\pi_Y, \rho_Y)|_{X\cap T_Y} : X\cap T_Y\longrightarrow Y\times \mathbb{ R} 
$$
is a submersion. 

\medskip

{\bf Proposition 5.1.}
 {\it  Let $X, Y$ be two submanifolds of $M$, $Y\subset \overline
X$ and let $y\in Y$.
  If the triple $(X, Y, y)$ satisfies the weak Whitney conditions,
then there exists a chart $C^1,$ $(U, \phi )$ 
at  $y$ for $Y$ in $M$ (the ambient manifold) and a neighbourhood
$U'$ of $y$, $U'\subset U$ such that $ (\pi_\phi, \rho_\phi )|_{U'\cap X}$ 
is a submersion.}

\medskip 

{\bf Corollary.} 
{\it Let $X, Y$  be two submanifolds of $M$ such that $Y\subset
\overline X$ and the pair  $(X, Y)$ satisfies the
conditions $(a)$ and $(\delta )$. Then there exists a tubular neighbourhood
 $T_Y$ of
$Y$ in $M$ such that $(\pi_Y , \rho_Y )|_X : 
 X\cap T_Y \longrightarrow Y\times \mathbb{ R}
$ is a submersion.}

\medskip

{\bf Proposition 5.2.}
{\it Every weakly Whitney stratified space is $(c)$-regular.}

\medskip

For the proofs see \cite{6}. We note that, when weak Whitney regularity holds, the control function in the definition of $(c)$-regularity can be chosen to be a standard distance function arising from a tubular neighbourhood. This means  that weak Whitney regularity is a much stronger condition than mere $(c)$-regularity, for which the control function may be weighted homogeneous or even infinitely flat along $Y$.

\section  {Complex stratifications.}

\medskip

In Example I we saw an  example of a weakly Whitney regular real algebraic stratification in $\R^3$ which is not Whitney $(b)$-regular. 
We are now interested in comparing weak Whitney regularity and Whitney regularity of {\it complex} analytic or {\it complex} algebraic stratifications, the main question being whether the extra `rigidity' of complex analytic varieties prevents the existence of weakly Whitney complex analytic stratifications which are not Whitney regular.

\medskip

Let $F$ be an analytic function germ from $\C^n\times \C$ to $\C$, defined in a neighbourhood of $0$,
$$\begin{matrix} F: & \C^n\times \C,0 &\longrightarrow& \C,0 \\ & (x,t)&\longmapsto & F(x,t)  \end{matrix}$$
where $F(0,t)=0.$
We denote by $\pi$ the projection on the second factor, and let $V=F^{-1}(0)$, $Y=\{0\}^n \times \C$ and $V_{t}=\{x\in \C^n\,|\;F(x,t)=0\}.$
We assume that each $V_{t}$ has an isolated singularity at $(0,t)$, the critical set of the restriction of $\pi$ to $V$ is $Y,$ and $X=V\setminus Y$ is an analytic complex manifold of dimension $n$.

For each point $(x,t)\in X$ we have 
$$T_{(x,t)}X=\left\{(u,v)\in \C^n\times\C\, \left| \sum_{i=1}^{n}u_{i}\frac{\partial F}{\partial x_{i}}(x,t) +v\frac{\partial F}{\partial t}(x,t)=0\right.\right\}=\left(\C \overline{grad} F\right)^{\perp}.$$
Let $grad F=(\frac{\partial F}{\partial x_{1}},\ldots, \frac{\partial F}{\partial x_{n}},\frac{\partial F}{\partial t})$, $grad_{x} F=(\frac{\partial F}{\partial x_{1}},\ldots, \frac{\partial F}{\partial x_{n}})$
and $\| grad_{x} F\|^2=  \sum_{i=1}^{n}\|\frac{\partial F}{\partial x_{i}} \|^2.$

\medskip

The following characterisations of conditions $(a)$, $(b^{\pi})$ and $(\delta^{\pi})$ are straightforward.

\medskip

\underline { Whitney's  condition $(a)$ }

The pair $(X,Y)$ satisfies Whitney's condition $(a)$ at $0$  if and only if 

$$\lim_{\substack{(x,t)\to 0\\ (x,t)\in X}}\left ( \frac{\frac{\partial F}{\partial t}(x,t)}{\| grad_{x} F(x,t)\|}\right)=0.$$

\underline { Whitney's  condition $(b^{\pi})$ }

The couple $(X,Y)$ satisfies Whitney's condition $(b^\pi)$ at $0$ if and only if 

$$\lim_{\substack{(x,t)\to 0\\ (x,t)\in X}}\left (\frac{\sum_{i=1}^{n}x_{i}\frac{\partial F}{\partial x_{i}}(x,t)}{\|x\|\| grad_{x} F(x,t)\|}\right)=0.$$

\underline {  Condition $(\delta^{\pi})$ }

The pair $(X,Y)$ satisfies the $(\delta^{\pi})$ condition  at $0$ if and only if  
there exists a real number $0\leq\delta<1$ such that 

$$\lim_{\substack{(x,t)\to 0\\ (x,t)\in X}}\left (\frac{\sum_{i=1}^{n}x_{i}\frac{\partial F}{\partial x_{i}}(x,t)}{\|x\|\| grad_{x} F(x,t)\|}\right)\leq\delta.$$

Recall that Whitney's condition $(b)$ implies  $(a)+(\delta^{\pi}).$
  
  \medskip
 {\bf Question.} Is the converse true in the complex hypersurface case,  i.e.  does $(a)+(\delta^{\pi})$ imply $(b)$ or, equivalently, does $(a)+(\delta^{\pi})$ imply $(b^{\pi})$ ?

    \medskip
       
\noindent {\bf Remark 6.1.} We know by the fundamental result of L\^e Dung Tr\`ang and K. Saito {\cite{18}} that a family of complex hypersurfaces with isolated singularities has constant  Milnor number if and only if 

$$\lim_{(x,t)\to 0}\left ( \frac{\frac{\partial F}{\partial t}(x,t)}{\| grad_{x} F(x,t)\|}\right)=0,$$

\noindent which implies  condition $(a)$.

The following lemma due to Briançon and Speder {\cite{11}} gives an equivalent condition to $(b^{\pi})$ when $(a)$ is satisfied.

Let $\gamma:([0,1],0)\to ( \mathbb{C}^n\times \mathbb{C},0)$, be a germ of an analytic arc and $\nu$ the valuation along $\gamma$
in the local ring $\mathcal{O}_{n+1,0}.$ 

{\bf Notation:} $\nu(x):=\inf\{\nu(x_i)| 1\leq i\leq n\}$ and $\nu(J_x(F)):=\inf\{\nu(\frac{\partial F}{\partial x_i}| 1\leq i\leq n\}.$

\begin{Lm}\label{simple} 

The following statements are equivalent:
\begin{enumerate}
\item
the pair $(X,Y)$ satisfies $(b^\pi)$  at $0$,

\item 
$$\lim_{\substack{(x,t)\to 0\\ (x,t)\in X}}\left (\frac{t\frac{\partial F}{\partial t}(x,t)}{\|x\|\| grad_{x} F(x,t)\|}\right)=0.$$
\end{enumerate}
 
In other words,
the following statements are equivalent:
\begin{enumerate}
\item $\nu( \sum_{i=1}^{n}x_{i}\frac{\partial F}{\partial x_{i}})>\nu(x) +\nu(J_x(F))$

\item $\nu(t)+\nu( \frac{\partial F}{\partial t})>\nu(x) +\nu(J_x(F))$
\end{enumerate}

where $\nu$ is the valuation along germs of analytic arcs $\gamma:[0,1]\to X$.
\end{Lm}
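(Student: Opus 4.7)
The plan is to translate both sides of the equivalence into valuation statements along an arbitrary real-analytic arc in $X$, then link them via the chain rule applied to $F\circ\gamma\equiv 0$. By the real-analytic curve selection lemma, each of the two limit statements in the lemma is equivalent to the corresponding valuation inequality holding along every germ of real-analytic arc $\gamma(s)=(x(s),t(s))$ in $X$ with $\gamma(0)=0$: if the limit fails to be $0$, the semi-analytic set where the absolute value of the ratio stays $\geq\epsilon$ has $0$ in its closure, and curve selection produces an arc on which the corresponding valuation inequality fails. Fixing such an arc, it therefore suffices to establish
$$\nu\Bigl(\sum_i x_i \tfrac{\partial F}{\partial x_i}(\gamma)\Bigr) > \nu(x)+\nu(J_xF) \iff \nu\Bigl(t\,\tfrac{\partial F}{\partial t}(\gamma)\Bigr) > \nu(x)+\nu(J_xF).$$

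Differentiating $F(\gamma(s))\equiv 0$ in $s$ produces the bridge identity
$$\sum_i x_i'(s)\,\frac{\partial F}{\partial x_i}(\gamma(s)) + t'(s)\,\frac{\partial F}{\partial t}(\gamma(s)) = 0,$$
so $\nu\bigl(\sum_i x_i'\,\tfrac{\partial F}{\partial x_i}(\gamma)\bigr)=\nu\bigl(t'\tfrac{\partial F}{\partial t}(\gamma)\bigr)=\nu\bigl(t\,\tfrac{\partial F}{\partial t}(\gamma)\bigr)-1$ whenever $t\not\equiv 0$. The degenerate cases $t\equiv 0$ on $\gamma$, or $\mathrm{grad}_xF\circ\gamma\equiv 0$, render both sides of the equivalence trivially true by inspection, and $x\equiv 0$ cannot occur since $\gamma\subset X$. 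The lemma therefore reduces to the ``derivative shift''
$$\nu\Bigl(\sum_i x_i \tfrac{\partial F}{\partial x_i}(\gamma)\Bigr) > \nu(x)+\nu(J_xF) \iff \nu\Bigl(\sum_i x_i' \tfrac{\partial F}{\partial x_i}(\gamma)\Bigr) > \nu(x)-1+\nu(J_xF).$$

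To prove this shift, set $p_i=\nu(x_i)$, $q_i=\nu(\tfrac{\partial F}{\partial x_i}\circ\gamma)$, $p=\nu(x)=\min_i p_i$, $q=\nu(J_xF)=\min_i q_i$, and write the expansions $x_i(s)=c_is^{p_i}+\cdots$, $\tfrac{\partial F}{\partial x_i}(\gamma(s))=d_is^{q_i}+\cdots$. If no single index realises both minima simultaneously, then $\min_i(p_i+q_i)>p+q$ and both valuations in the shift exceed their required bounds automatically. Otherwise, the leading coefficient of $\sum_i x_i \tfrac{\partial F}{\partial x_i}$ at order $p+q$ equals $S:=\sum_{p_i=p,\,q_i=q}c_id_i$, while that of $\sum_i x_i' \tfrac{\partial F}{\partial x_i}$ at order $p+q-1$ equals $pS$. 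Since $p=\nu(x)\geq 1$, the two non-cancellation conditions $S\neq 0$ and $pS\neq 0$ coincide, which completes the derivative shift and hence the lemma. The main obstacle is precisely this bookkeeping: one must track carefully when the minima defining $\nu(x)$ and $\nu(J_xF)$ are realised at the same index, and dispose of the degenerate arcs, but no deeper input (such as condition $(a)$ or integral closure arguments) is actually needed for the equivalence.
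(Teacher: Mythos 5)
Your proof is correct and takes essentially the same route as the paper's: both differentiate $F\circ\gamma\equiv 0$ and observe that the coefficient of $\sum_i x_i'\,\frac{\partial F}{\partial x_i}$ at order $\nu(x)+\nu(J_x(F))-1$ is $\nu(x)\geq 1$ times the coefficient of $\sum_i x_i\,\frac{\partial F}{\partial x_i}$ at order $\nu(x)+\nu(J_x(F))$ (the paper's $a\langle A,B\rangle$ versus your $pS$), so each condition is equivalent to the vanishing of that single coefficient. Your explicit appeal to the curve selection lemma and the treatment of degenerate arcs merely spell out what the paper leaves implicit.
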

\begin{proof}

For $s\in [0,1]$, $\gamma(s)=(x_1(s),\ldots,x_n(s),t(s)).$

Since $F\circ \gamma\equiv 0$, we have 

$$ \sum_{i=1}^n x'_i(s) \frac{ \partial F\circ \gamma}{\partial x_i}(s)=-t'(s) \frac{ \partial F\circ \gamma}{\partial t}(s).  \quad\quad (3)
$$

If $a=\nu(x)$ and $b=\nu(J_x(F))$, there exist two  non zero vectors of 
$\mathbb{C}^n$,  $A$ and  $B$, such that
$$\begin{cases}
 (x_1(s), \ldots,x_n(s))=As^ a+\ldots \\
(\frac{ \partial F\circ \gamma}{\partial x_1}(s),\ldots,\frac{ \partial F\circ \gamma}{\partial x_n}(s))=Bs^ b+ \ldots .
\end{cases}$$ 

We suppose  $(1)$ holds. Then since  $\sum_{i=1}^n x_i(s) \frac{ \partial F\circ \gamma}{\partial x_i}(s)=\langle A,B\rangle s^{a+b} + \ldots$ ,
we must have  $\langle A,B\rangle =0.$

From (3) we have 
$$t'(s) \frac{ \partial F\circ \gamma}{\partial t}(s) =-\sum_{i=1}^n x'_i(s) \frac{ \partial F\circ \gamma}{\partial x_i}(s)
=-a\langle A,B\rangle s^{a+b-1} + \ldots .$$

Then $\nu(t)+\nu( \frac{\partial F}{\partial t})=\nu(t'(s) \frac{ \partial F\circ \gamma}{\partial t})+1>(a+b-1) +1=a+b.$

We suppose now $(2)$ holds. Then since 
$t'(s) \frac{ \partial F\circ \gamma}{\partial t}=-a\langle A,B\rangle s^{a+b-1} + \ldots$,  we must  have again
 $\langle A,B\rangle=0$, which is exactly condition $(1)$.

\end{proof}

\section  {The Brian\c{c}on and Speder example with $\mu = 364$.}

 In this section we study the original example, due to Brian\c{c}on and Speder  {\cite{10}}, of a topologically trivial family of isolated complex hypersurface singularities which are not Whitney regular. The examples of Brian\c con and Speder given in {\cite{10}} were the {\it only} such examples known, until very recently. 

We shall carry out initially explicit calculations for the most well-known 
example of  Brian\c{c}on and Speder, analysed in their celebrated note of January 1975
 : $F(x,y,z,t) = F_t(x,y,z) =x^5+txy^6+y^7z+z^{15} $  for which $\mu (F_t) = 364$ for all $t$ near $0$.
\medskip

\underline { The Brian\c{c}on and Speder  example is not weakly Whitney  regular.  }
\medskip

Let  $F(x,y,z,t)= x^5+txy^6+y^7z+z^{15}$. Then $F$ is a quasihomogenous 
$\mu$-constant family of type $(3,2,1;15).$ Thus the 
stratification $(F^{-1}(0) \setminus (0t), (0t))$ is $(a)$-regular by Remark 6.1.

We shall construct an explicit analytic path $\gamma(s)=(x(s),y(s),z(s),t(s))$ contained in $F ^{-1}(0)$  such that the module of  $\Delta(x,y,z,t)=\left (\frac{\sum_{i=1}^{n}x_{i}\frac{\partial F}{\partial x_{i}}(x,y,z,t)}{\|x\|\| grad_{x} F(x,y,z,t)\|}\right)$ tends
to $1$ when $(x,y,z,t)$  tends to $0$ along $\gamma(s)$.  This means that condition $(\delta^{\pi})$ is not satisfied at $0$, by the characterisation given in section 6. By Lemma 1 in section 2 it then follows using $(a)$-regularity that $(\delta)$ is not satisfied at $0$, so that weak Whitney regularity fails.

Following {\cite{BS1}} and {\cite{Tr1}} we take

 $$\left\{\begin{array}{lll} x(s)=s^8 \\y(s)=a s^5\\ z(s)= {{4}\over{{a}^7}} \lambda s^5\\t(s)=-\frac{5}{a^6}s^2 \end{array}\right.$$

\noindent with $a \not =0.$

For $\gamma(s)$ to lie on $F^{-1}(0)$ we  must have that
$$ F(\gamma(s))=(1-\frac{5}{a^6}a^6+ 4 \lambda + ({{4}\over{a^7}})^{15} \lambda^{15}s^{35})s^{40}\equiv 0, $$

\noindent  so that 
$G(\lambda,s)=-4+ 4 \lambda + ({{4}\over{a^7}})^{15} \lambda^{15}s^{35} \equiv 0$.

Since $\frac{\partial G}{\partial \lambda}(\lambda,0)= 4 \not=0$,  it follows by the implicit function theorem that $\lambda $ is a function of $s$ for $s$ near $0$.

Note that $\lambda(0)=1$.

Then we have along $\gamma(s)$  near $s=0,$

$$\left\{\begin{array}{lll}
 \frac{\partial F}{\partial x}=5x^4+ty^6=5s^{32}- \frac{5}{a^6}a^6s^{32} = 0
 \\
 \frac{\partial F}{\partial y}=6txy^5+7y^{6}z =\left(\frac{-30}{a}+ {{28}\over{a}} \lambda\right)s^{35}\\
  \frac{\partial F}{\partial z}=y^7+15z^{14}=a^7s^{35} + 15 ({{4}\over{a^7}})^{14}\lambda^{14}s^{70}\sim  a^7s^{35}.
  \end{array}\right.$$

Because  $\lambda(0) = 1$, the limit of orthogonal secant vectors $\frac{(x,y,z)}{\|(x,y,z)\|}$ is  $(0: a :  {{4}\over{a^7}}) = (0:a^8:4)$,  and the limit of normal vectors $ \frac{ grad_{x} F(x,y,z,t)}{\| grad_{x} F(x,y,z,t)\|}$ is 
 $(0 : \frac{-2}{a} : a^7  ) = (0: -2 : a^8). $

Then $\Delta(\gamma(s))$ tends to $1$ if and only if 
$a^8= ({{-2}}) {{4}\over{a^8}}$.  It follows that $(\delta^{\pi})$  is not satisfied along $\gamma$ if and only if
$a^{16}=-8$.  Choosing $a$ to be one of these 16 complex 
numbers,  we have the desired conclusion, i.e. that $(\delta^{\pi})$ fails. Note however that we cannot exclude the possibility that there are other curves on which $(\delta^{\pi})$ fails. For this purpose, we shall next  make a systematic study of all curves $\gamma(s)$ on $F^{-1}(0)$ and passing through the origin.

\medskip

A similar calculation for the simpler $\mu$-constant family $F(x,y,z,t) = x^3 + txy^3 + y^4z + z^9$, for which $\mu = 56$, also due to Brian\c{c}on and Speder (see  {\cite{10}}), shows that  $(\delta^{\pi})$ fails for this example too. Our systematic study to determine all curves on which $(\delta^{\pi})$ fails for this example will be copied in a more general study given below of the infinite family of examples, of which  $x^3 + txy^3 + y^4z + z^9$ is the first, defined by Brian\c con and Speder in their celebrated 1975 note \cite{10}.

\section  {Failure of weak Whitney regularity: a complete analysis.}

Take again $F(x,y,z,t)= x^5+txy^6+y^7z+z^{15}$. 

\medskip

In what follows we determine the initial terms of {\it all} curves along which condition $(\delta)$ fails, or equivalently by Lemma 1, along which $(\delta^{\pi})$ fails.

\medskip

Let $\gamma:([0,1],0)\to ( \mathbb{C}^n\times \mathbb{C},0)$, be a germ of an analytic arc and $\nu$ the valuation along $\gamma.$

Let  $X=(x,y,z)$ and $J_XF=( \frac{\partial F}{\partial x}, \frac{\partial F}{\partial y}, \frac{\partial F}{\partial z})$,

We will use the notations $\nu(X):=\inf\{\nu(x), \nu(y),\nu(z)\}$ and $\nu(J_X(F)):=\inf\{\nu(\frac{\partial F}{\partial x}),\nu(\frac{\partial F}{\partial y}), \nu(\frac{\partial F}{\partial z})\}.$

We  begin by  determining   the curves along  which condition $(b^{\pi})$ holds (using $(a)$-regularity we know that $(b^{\pi})$ is equivalent to $(b)$ here). 

Because of  lemma \ref{simple} and the $\mu$-constant condition,  if $\nu(t)\geq \nu(X)$ then $(b')$ holds

 ($\nu(t)+\nu( \frac{\partial F}{\partial t})=\nu(t)+\nu(x)+6\nu(y)>\nu(x) +\nu(J_X(F))$)  
 
 which means that Whitney's condition $(b)$ holds when  $\nu(t)\geq\nu(X)$.\checkmark

We suppose from now on that  \fbox{$\nu(t)<\nu(X)$}.

If $\nu(\frac{\partial F}{\partial x})=\inf \{4\nu(x), \nu(t)+6\nu(y)\}$ we have:
\begin{enumerate}
\item
if $4\nu(x)\geq \nu(t)+6\nu(y),$ we must have $\nu(x)>\nu(y)$ and then  $\nu(t)+\nu( \frac{\partial F}{\partial t})=\nu(t)+\nu(x)+6\nu(y)>\nu(y)+ (\nu(t)+6\nu(y))
\geq \nu(X) +\nu(J_X(F)).$\checkmark
\item
if $4\nu(x)<\nu(t)+6\nu(y)$ then  $\nu(t)+\nu( \frac{\partial F}{\partial t})=\nu(t)+\nu(x)+6\nu(y)>\nu(x)+ 4\nu(x)
\geq \nu(X) +\nu(J_X(F)).$\checkmark

\end{enumerate}

If $\nu(\frac{\partial F}{\partial x})>\inf \{4\nu(x), \nu(t)+6\nu(y)\}$ \\
we must have  \fbox{$4\nu(x)= \nu(t)+6\nu(y)$}, and because $F\circ\gamma\equiv0$, we have that $x^5+txy^6=-y^7z- z^{15}.$\\

On the other hand $x^5+ty^6x=-4x^5+ x\frac{\partial F}{\partial x}$ and  $\nu(\frac{\partial F}{\partial x})>\inf \{4\nu(x), \nu(t)+6\nu(y)\}$ implies  that $\nu(x^5+ty^6x)=5\nu(x). $  Then $5\nu(x)\geq \inf\{7\nu(y)+\nu(z),15\nu(z)\}$\\
and
$5\nu(x)=\inf\{7\nu(y)+\nu(z),15\nu(z)\}$  if $\nu(y)\not=2\nu(z).$

\begin{enumerate}[(i)]

\item If  $\nu(y)>  2\nu(z)$ then $\nu(x)= 3\nu(z).$

Then $\nu(t)+\nu( \frac{\partial F}{\partial t})=\nu(t)+\nu(x)+6\nu(y)> \nu(t)+ 15\nu(z)>  \nu(z)+ \nu( \frac{\partial F}{\partial z})
\geq \nu(X) +\nu(J_X(F))$ \checkmark

\item If  $\nu(y)=  2\nu(z)$ 
\begin{enumerate}
\item  and  $\nu(x)= 3\nu(z)$, then from $ 4\nu(x)= \nu(t)+6\nu(y)$ we obtain $12\nu(z)= \nu(t)+12\nu(z)$ i.e. $\nu(t)=0$
absurd \checkmark
\item and $\nu(x)>3\nu(z)$, then $\nu(\frac{\partial F}{\partial z})=8\nu(z)$ and it follows that\\
$\nu(t)+\nu( \frac{\partial F}{\partial t})=\nu(t)+\nu(x)+6\nu(y)> \nu(t)+ 15\nu(z)>  \nu(z)+ \nu( \frac{\partial F}{\partial z})
\geq \nu(X) +\nu(J_X(F))$ \checkmark
\end{enumerate}

\item if  $\nu(y)<  2\nu(z)$, we have $5\nu(x)=7\nu(y)+\nu(z),$

and  the previous equation gives $\nu(x)+\nu(t)= \nu(y)+\nu(z)$.

\end{enumerate}
We can suppose  now \fbox{$\nu(x)+\nu(t)= \nu(y)+\nu(z)$} and  \fbox{$\nu(y)< 2\nu(z)$}.

We carry on with the last cases:

\begin{enumerate}[(I)]
\item
If $\nu(z)> \nu(y)$
we have $\nu(z^{14})>\nu(y^7)$ so $\nu(\frac{\partial F}{\partial z})=7\nu(y)$.\\
Then $\nu(t)+\nu( \frac{\partial F}{\partial t})=\nu(t)+\nu(x)+6\nu(y)=\nu(z)+7\nu(y)>\nu(y)+\nu(\frac{\partial F}{\partial z})\geq \nu(X) +\nu(J_X(F)).$\checkmark
\item
If $2\nu(z)>\nu(y)>\nu(z)$ then $\nu(\frac{\partial F}{\partial z})= 7\nu(y)>6\nu(y)+\nu(z) .$

 $y\frac{\partial F}{\partial y}= 6txy^6+7y^{7}z=6txy^6-7(x^5+txy^6+z^{ 15})=-7x^5 -txy^6 -7z^{ 15}$

since $15 \nu(z) >\nu(z)+7\nu(y)$ and $\nu(\frac{\partial F}{\partial x})>4\nu(x)$

so we must have $\nu(y\frac{\partial F}{\partial y})=\nu(txy^6)$ i.e. $\nu(\frac{\partial F}{\partial y})=\nu(txy^5).$

Then $\nu(t)+\nu( \frac{\partial F}{\partial t})=\nu(t)+\nu(x)+6\nu(y)> \nu(txy^5)+\nu(z)\geq \nu(\frac{\partial F}{\partial y})+\nu(z)\geq \nu(X) +\nu(J_X(F)).$\checkmark

\end{enumerate}

{\bf Résumé: a germ of  arc along which Whitney condition $(b)$  is not satisfied must fulfil the following conditions:
\begin{itemize}
\item $\nu(x)>\nu(y)=\nu(z)> \nu(t)$
\item
$\nu(x)+\nu(t)=\nu(z)+\nu(y)$
\item
$4\nu(x)=\nu(t)+6\nu(y)$

\end{itemize}

}

Finally the set of germs of analytic arcs along which Whitney condition $(b)$ is not satisfied is contained in the set

$$\mathcal{ A}:=\left\{\right.\gamma(s)=(x(s),y(s),z(s),t(s)) :[0,1]\to \mathbb{C}^n\times \mathbb{C}\,| $$
$$\begin{cases}
 x(s)=a_1s^{\alpha_1}+\ldots\\
y(s)= a_2s^{\alpha_2}+\ldots\\
z(s)=a_3s^{\alpha_3}+\ldots\\
t(s)=a_4s^{\alpha_4}+\ldots
\end{cases}$$
$$ 5\alpha_1=8\alpha, \alpha_2=\alpha_3=\alpha, 5\alpha_4=2\alpha, \alpha\equiv 0[5], a_i\in \mathbb{C}^*
\text{ satisfying some conditions} \left\}\right..$$

It remains to characterize the subset of arcs along which the $\delta $ condition is not satisfied.

Let $\gamma\in \mathcal{A}$,  we may suppose $a_1=1$, and write $a_2=a, a_3=b$ and $a_4=c$.

$F\circ\gamma(s)=(s^{ 8\alpha}+...) +( c.a^6 s^{ 8\alpha}+ \ldots) + (a^7 b s^{ 8\alpha}
+\ldots) + (b^{ 15}s^{ 15\alpha }+\ldots )\equiv0$

so then $ s^{8\alpha}(1+c.a^6+ a^7.b+s(\ldots+ b^{15}s^{  7\alpha-1}+ \ldots))\equiv0$,

and we must have 

\fbox {$1+c.a^6+ a^7.b=0$}.

Then along $\gamma(s)$  near $s=0$ we have,

$$\begin{cases}\frac{\partial F}{\partial x}=5x^4+ty^6= s^{ \frac{32}{5}\alpha}(5+ca^6)+ \ldots
 \\
 \frac{\partial F}{\partial y}=6txy^5+7y^{6}z =a^5(6c+7ab)s^{ 7\alpha}+ \ldots\\
  \frac{\partial F}{\partial z}=y^7+15z^{14}=a^7s^{ 7\alpha} +\ldots +14b^{ 14}  s^{  14\alpha}+\ldots.

\end{cases}$$
But, $\nu( \frac{\partial F}{\partial x})> 7\alpha$ imposes the condition 
\fbox{$5+ca^6=0$}.

It follows that  $c=-\frac{5}{a^6}$, $b=\frac{4}{a^7}$ and $a^5(6c+7ab)=-\frac{2}{a}.$

The limit of orthogonal secant vectors $\frac{(x,y,z)}{\|(x,y,z)\|}$ is  $(0: a :  b)=(0: a^8 :  4) $,  and the limit of normal vectors $ \frac{ grad_{x} F(x,y,z,t)}{\| grad_{x} F(x,y,z,t)\|}$ is 
 $(0 : a^5(6c+7ab) : a^7  )=(0 : -\frac{2}{a} : a^7  ) =(0 : -2 : a^8 ). $
 It follows that $(\delta^{\pi})$  is not satisfied along $\gamma$ if and only if
$a^{16}=-8$.  Choosing $a$ to be one of these 16 complex 
numbers,  we have the desired conclusion, namely that $(\delta^{\pi})$ fails precisely on those curves $\gamma(s)= (x(s), y(s), z(s), t(s))$ whose initial terms are $ (s^8, a s^5, 4 a^{-7} s^5,  - 5 a^{-6} s^2)$.

By Lemma 1, and $(a)$-regularity, these are precisely the curves on which $(\delta)$ fails, that is to say we have identified all of the curves on which weak Whitney regularity fails to hold.

\section{	Other  Brian\c con and Speder examples}

   We perform similar calculations for the infinite family of examples, also due to Brian\c con and Sperder  \cite{10} : $F(x,y,z,t) = x^3+txy^\alpha+ y^\beta z+z^{ 3\alpha}$, 
where \fbox{$\alpha\geq 3$} and \fbox{$3\alpha=2\beta+1$}.\\
The functions $f_t(x,y,z) = F_t(x,y,z)$ are  quasihomogenous 
of type $(\alpha ,2,1; 3\alpha )$ with isolated singularity at the origin, for each $t$, 
and  so each $\mu_t=(3\alpha-1)(3\alpha-2)=2\beta(2\beta-1)$. Thus $f_t$ is a $\mu$-constant family . 

We are again hunting for analytic arc germs where  condition $(\delta)$ fails.

$$\left\{\begin{array}{lll}
 \frac{\partial F}{\partial x}=3x^2+ty^\alpha
 \\
 \frac{\partial F}{\partial y}=\alpha	txy^{ \alpha-1}+\beta	y^{\beta-1}z \\
  \frac{\partial F}{\partial z}=y^\beta+3\alpha	z^{3\alpha-1}=y^\beta+3\alpha z^{2\beta}.
  \end{array}\right.$$

Let $\gamma:([0,1],0)\to ( \mathbb{C}^n\times \mathbb{C},0)$, be a germ of an analytic arc and $\nu$ the valuation along $\gamma.$

Let  $X=(x,y,z)$ and $J_XF=( \frac{\partial F}{\partial x}, \frac{\partial F}{\partial y}, \frac{\partial F}{\partial z})$, then write $\nu(X):=\inf\{\nu(x), \nu(y),\nu(z)\}$ and $\nu(J_X(F)):=\inf\{\nu(\frac{\partial F}{\partial x}),\nu(\frac{\partial F}{\partial y}), \nu(\frac{\partial F}{\partial z})\}.$

We  begin by  determining  along  which curves condition $(b^{\pi})$ holds (in this case it is equivalent to $(b)$, because $(a)$ holds (this is a consequence of having constant Milnor number \cite{18}, and may also be checked by direct calculation). 

Because of  lemma \ref{simple} and the $\mu$-constant condition,  if $\nu(t)\geq \nu(X)$ then $(b^{\pi})$ holds, because 
 ($\nu(t)+\nu( \frac{\partial F}{\partial t})=\nu(t)+\nu(x)+\alpha\nu(y)>\nu(x) +\nu(J_x(F))$,  which means that Whitney's condition $(b)$ holds when  $\nu(t)\geq\nu(X)$.\checkmark

We suppose from now on that  \fbox{$\nu(t)<\nu(X)$}.

If $\nu(\frac{\partial F}{\partial x})=\inf \{2\nu(x), \nu(t)+\alpha\nu(y)\}$ we have:
\begin{enumerate}
\item
if $2\nu(x)\geq \nu(t)+\alpha\nu(y),$ we must have $\nu(x)>\nu(y)$  then  $\nu(t)+\nu( \frac{\partial F}{\partial t})=\nu(t)+\nu(x)+\alpha\nu(y)>\nu(y)+ (\nu(t)+\alpha\nu(y))
\geq \nu(X) +\nu(J_x(F)).$\checkmark
\item
if $2\nu(x)<\nu(t)+\alpha\nu(y)$ then  $\nu(t)+\nu( \frac{\partial F}{\partial t})=\nu(t)+\nu(x)+\alpha\nu(y)>\nu(x)+ (3\nu(x))
\geq \nu(X) +\nu(J_x(F)).$\checkmark

\end{enumerate}

If $\nu(\frac{\partial F}{\partial x})>\inf \{2\nu(x), \nu(t)+\alpha\nu(y)\}$ \\
we must have  \fbox{$2\nu(x)= \nu(t)+\alpha\nu(y)$}, and because $F\circ\gamma\equiv0$, i.e. $x^3+txy^\alpha=- y^\beta z-z^{ 3\alpha}.$\\

On the other hand $x^3+txy^\alpha=-2x^3+ x\frac{\partial F}{\partial x}$ and  $\nu(\frac{\partial F}{\partial x})>\inf \{2\nu(x), \nu(t)+\alpha\nu(y)\}$ implies  $\nu(x^3+txy^\alpha)=3\nu(x). $  Then $3\nu(x)\geq \inf\{\beta\nu(y)+\nu(z),3\alpha\nu(z)\}$\\
and
$3\nu(x)=\inf\{\beta\nu(y)+\nu(z),3\alpha\nu(z)\}$  if $\nu(y)\not=2\nu(z).$

\begin{enumerate}[(i)]

\item If  $\nu(y)>  2\nu(z)$ then $\nu(x)= \alpha\nu(z).$

Then $\nu(t)+\nu( \frac{\partial F}{\partial t})=\nu(t)+\nu(x)+\alpha\nu(y)> \nu(t)+ 3\alpha\nu(z)>  \nu(z)+ \nu( \frac{\partial F}{\partial z})
\geq \nu(X) +\nu(J_X(F))$ \checkmark

\item If  $\nu(y)=  2\nu(z)$ 
\begin{enumerate}
\item  and  $\nu(x)= \alpha\nu(z)$, then from $ 2\nu(x)= \nu(t)+\alpha\nu(y)$ we obtain $2\alpha\nu(z)= \nu(t)+2\alpha\nu(z)$ i.e. $\nu(t)=0$ absurd \checkmark
\item and $\nu(x)>\alpha\nu(z)$ then $\nu(\frac{\partial F}{\partial z})=2\beta\nu(z)$ it follows that\\
$\nu(t)+\nu( \frac{\partial F}{\partial t})=\nu(t)+\nu(x)+\alpha\nu(y)> \nu(t)+ 3\alpha\nu(z)>  \nu(z)+ \nu( \frac{\partial F}{\partial z})
\geq \nu(X) +\nu(J_X(F))$ \checkmark
\end{enumerate}

\item If  $\nu(y)<  2\nu(z)$, we have $3\nu(x)=\beta\nu(y)+\nu(z).$

and  the previous equation gives $\nu(x)+\nu(t)= (\beta-\alpha)\nu(y)+\nu(z)$

\end{enumerate}
We can suppose  now \fbox{$\nu(x)+\nu(t)= (\beta-\alpha)\nu(y)+\nu(z)$} and  \fbox{$\nu(y)< 2\nu(z)$}

We carry on with the last cases:

\begin{enumerate}[(I)]
\item
If $\nu(z)> \nu(y)$
we have $\nu(z^{3\alpha-1})=\nu(z^{2\beta})>\nu(y^\beta)$ so $\nu(\frac{\partial F}{\partial z})=\beta\nu(y)$.\\
Then $\nu(t)+\nu( \frac{\partial F}{\partial t})=\nu(t)+\nu(x)+\alpha\nu(y)=\nu(z)+\beta\nu(y)>\nu(y)+\nu(\frac{\partial F}{\partial z})\geq \nu(X) +\nu(J_X(F)).$\checkmark
\item
If $2\nu(z)>\nu(y)>\nu(z)$ then $\nu(\frac{\partial F}{\partial z})= \beta\nu(y)>(\beta-1)\nu(y)+\nu(z) .$

 $y\frac{\partial F}{\partial y}= \alpha txy^\alpha+\beta y^{\beta}z= \alpha txy^\alpha-\beta(x^3+txy^\alpha+z^{ 3\alpha})=-\beta	x^3-(\beta-\alpha) txy^\alpha-\beta z^{ 3\alpha}$

since $3\alpha=2\beta+1$, we get   $3\alpha \nu(z) >\nu(z)+\beta\nu(y)$ and  from $\nu(\frac{\partial F}{\partial x})>2\nu(x)$

we must have $\nu(y\frac{\partial F}{\partial y})=\nu(txy^\alpha)$ i.e. $\nu(\frac{\partial F}{\partial y})=\nu(txy^{ \alpha-1}).$

Then $\nu(t)+\nu( \frac{\partial F}{\partial t})=\nu(t)+\nu(x)+\alpha\nu(y)= \nu(txy^{ \alpha-1})+\nu(y)>\nu(txy^{ \alpha-1})+\nu(z)\geq \nu(\frac{\partial F}{\partial y})+\nu(z)\geq \nu(X) +\nu(J_X(F)).$\checkmark

\end{enumerate}

{\bf Résumé: a germ of  arc along which Whitney condition $(b)$ is not satisfied must fulfil the following conditions:
\begin{itemize}
\item $\nu(x)>\nu(y)=\nu(z)> \nu(t)$
\item
$\nu(x)+\nu(t)=\nu(z)+(\beta-\alpha)\nu(y)$
\item
$2\nu(x)=\nu(t)+\alpha\nu(y)$

\end{itemize}

}

Finally   the set of germs of analytic arcs along which Whitney condition $(b)$ is not satisfied is contained in the set

$$\mathcal{ A}:=\left\{\right.\gamma(s)=(x(s),y(s),z(s),t(s)) :[0,1]\to \mathbb{C}^n\times \mathbb{C}\,| $$
$$\begin{cases}
 x(s)=a_1s^{\alpha_1} +\ldots\\
y(s)= a_2s^{\alpha_2} +\ldots\\
z(s)=a_3s^{\alpha_3} +\ldots\\
t(s)=a_4s^{\alpha_4} +\ldots
\end{cases}$$
$$ 3\alpha_1=(\beta+1)m, \alpha_2=\alpha_3=m, 3\alpha_4=m, \alpha\equiv 0[3], a_i\in \mathbb{C}^*
\text{ satisfying some conditions} \left\}\right..$$

It remains to characterize the subset of arcs along which the $(\delta )$ condition is not satisfied.

Let $\gamma\in \mathcal{A}$,  we may suppose $a_1=1$, and write $a_2=a, a_3=b$ and $a_4=c$

$F\circ\gamma(s)=(s^{ (\beta+1)m}+...) +( c.a^\alpha s^{ (\beta+1)m}+ \ldots) + (a^\beta b s^{ (\beta+1)m}
+\ldots) + (b^{ 3\alpha }s^{ 3\alpha m }+\ldots )\equiv0$

then $ s^{(\beta+1)m}(1+a^\alpha.c+ a^\beta.b+s(\ldots+ b^{3\alpha}s^{  \beta m}+ \ldots))\equiv0$

and we must have 

\fbox {$1+c.a^\alpha+ b.a^\beta=0$}.

Then along $\gamma(s)$  near $s=0$ we have,

$$\begin{cases}\frac{\partial F}{\partial x}=3x^2+ty^\alpha= s^{ \frac{2(\beta+1) m}{3}\alpha}(3+ca^\alpha)+ \ldots
 \\
 \frac{\partial F}{\partial y}=\alpha	txy^{ \alpha-1}+\beta	y^{\beta-1}z =(\alpha	ca^{ \alpha-1}+\beta b.a^{ \beta-1})s^{ \beta m}+ \ldots\\
  \frac{\partial F}{\partial z}=y^\beta+3\alpha	z^{3\alpha-1}=y^\beta+3\alpha z^{2\beta}=a^\beta	 s^{ \beta m} +\ldots +(2\beta)	b^{ 2\beta)}  s^{  (2\beta)m}+\ldots.

\end{cases}$$
But, the condition $\nu( \frac{\partial F}{\partial x})> \beta m$ implies
\fbox{$3+ca^\alpha=0$}

and it follows that  $c=-\frac{3}{a^\alpha}$, $b=\frac{2}{a^\beta}$ and $\alpha c.a^{ \alpha-1}+\beta b.a^{ \beta-1}=-\frac{1}{a}.$

The limit of orthogonal secant vectors $\frac{(x,y,z)}{\|(x,y,z)\|}$ is  $(0: a :  b)=(0: a :  \frac{2}{a^\beta}) $,  and the limit of normal vectors $ \frac{ grad_{x} F(x,y,z,t)}{\| grad_{x} F(x,y,z,t)\|}$ is 
 $(0 : \alpha c.a^{ \alpha-1}+\beta b.a^{ \beta-1} : a^\beta  )=(0 : -\frac{1}{a} : a^\beta  ). $
 It follows that $(\delta)$  is not satisfied along $\gamma$ if and only if
$a^{2\beta+2}=-2$.  Choosing $\alpha$ to be one of these $2\beta+2=3\alpha+1$ complex 
numbers,  we have the desired conclusion, i.e. that $(\delta)$ fails.

\section{Other examples.}

A Milnor number constant  family, $F_t (x,y,z) = z^{12} + zy^3x + ty^2x^3 + x^6 + y^5$, with $\mu = 166$, which is also not Whitney regular over the $t$-axis, was studied by E. Artal Bartolo, J. Fernandez de Bobadilla, I. Luengo and  A. Melle-Hernandez in a recent paper \cite{2}.
A series of Milnor number constant but non Whitney regular families, depending on a parameter $\ell$, was given by Abderrahmane \cite{1} as follows: $F^{\ell}_t (x,y,z) = x^{13} + y^{20} + z x^6 y^5 + t x^6 y^8 + t^2 x^{10} y^3 + z^{\ell},$ for integers $\ell \geq 7$. Here $\mu = 153 \ell + 32$, while $\mu^2 (F_0) = 260$ and $\mu^2 (F_t) = 189$, according to Abderrahmane.
We do not yet know whether weak Whitney regularity holds or fails for these examples.

\end{document}